\newcommand{\C}{\mathbb C}
\newcommand{\R}{\mathbb R}
\newcommand{\N}{\mathbb N}
\newtheorem{thm}{Theorem}[section]
\newtheorem{lem}[thm]{Lemma}
\newtheorem{prop}[thm]{Proposition}
\newtheorem{exa}[thm]{Example}
\newtheorem{cor}[thm]{Corollary}
\newtheorem{rk}[thm]{Remark}
\numberwithin{equation}{section}
\title{Hyers-Ulam stability of elliptic M\"obius difference equation}
\author[]{Young Woo Nam}
\affil[]{\small Mathematics Section,
      College of Science and Technology,
      Hongik University, 339--701 
      Sejong, Korea} 
\begin{document}

\date{}
\maketitle

\begin{abstract}
The linear fractional map $ f(z) = \frac{az+ b}{cz + d} $ on the Riemann sphere with complex coefficients $ ad-bc \neq 0 $ is called M\"obius map. If $ f $ satisfies $ ad-bc=1 $ and $ -2<a+d<2 $, then $ f $ is called {\em elliptic} M\"obius map. Let $ \{ b_n \}_{n \in \N_0} $ be the solution of the elliptic M\"obius difference equation $ b_{n+1} = f(b_n) $ for every $ n \in \N_0 $. Then the sequence $ \{ b_n \}_{n \in \N_0} $ has no Hyers-Ulam stability. 
\end{abstract}

\section{Introduction}

The first order difference equation is the solution of $ b_{n+1} = F(n,b_n) $ for $ n \in \N_0 $ with the initial point $ b_0 $. For the introductory method and examples see \cite{elaydi}. An interesting non-linear difference equation is the rational difference equation. For instance, Pielou logistic difference equation \cite{pielou} or Beverton-Holt equation \cite{BH,Sen}
are first order rational difference equation as a model for population dynamics with constraint. These equations are understood as the iteration of a kind of M\"obius transformation on the real line. In this paper, we investigate the Hyers-Ulam stability of another kind of M\"obius transformation which does not appear in population dynamics and extend the result to the complex plane. 
\medskip \\
Hyers-Ulam stability raised from Ulam's question \cite{ulam} about the stability of approximate homomorphism between metric groups. The first answer to this question was given by Hyers \cite{hyers}
for Cauchy additive equation in Banach space. Later, the theory of Hyers-Ulam stability is developed in the area of functional equation and differential equation by many authors. The theory of Hyers-Ulam stability for difference equation appears in relatively recent decades and is mainly searched for linear difference equations, for example, see \cite{jung1,jungnam,popa, XB} 
. Denote the set of natural numbers by $ \N $ and denote the set $ \N \cup \{\infty\} $ by $ \N_0 $. The set of real numbers and complex numbers by $ \R $ and $ \C $ respectively. Denote the unit circle by $ \mathbb{S}^1 $. 
\medskip \\
Suppose that the complex valued sequence $ \{ a_n\}_{n \in \N} $ satisfies the inequality
\begin{align*}
| a_{n+1} - F(n,a_n)| \leq \varepsilon
\end{align*}
for a $ \varepsilon > 0 $ and for all $ n \in \N_0 $, where $ | \cdot | $ is the absolute value of complex number. If there exists a sequence $ \{ b_n\}_{n \in \N} $ which satisfies that
\begin{align} \label{eq-Hyers Ulam stability}
b_{n+1} = F(n,b_n) 
\end{align}
for each $ n \in \N_0 $ and $ | a_n - b_n| \leq G(\varepsilon) $ for all $ n \in \N_0 $, where the positive number $ G(\varepsilon) \rightarrow 0 $ as $ \varepsilon \rightarrow 0 $. Then we say that the difference equation \eqref{eq-Hyers Ulam stability} has {\em Hyers-Ulam stability}.

\subsection*{Classification of M\"obius transformation}

Denote the Riemann sphere by $ \hat{\C} $, which is the one point compactification of the complex plane, namely, $ \C \cup \{ \infty \} $. Similarly, we define the {\em extended real line} as $ \R \cup \{ \infty \} $ and denote it by $ \hat{\R} $. 
M\"obius transformation (or M\"obius map) is the linear fractional map defined on $ \hat{\C} $ as follows
\begin{align} \label{eq-Mobius transformation}
g(z) = \frac{az + b}{cz + d}
\end{align}
where $ a,b,c $ and $ d $ are complex numbers and $ ad-bc \neq 0 $. Define $ g\left( -\frac{d}{c} \right) = \infty $ and $ g(\infty) = \frac{a}{c} $. 
If $ c = 0 $, then $ g $ is the linear function. Thus we assume that $ c \neq 0 $ throughout this paper. The M\"obius map which preserves $ \hat{\R} $ is called the {\em real} M\"obius map. A M\"obius map is real if and only if the coefficients of the map $ a,b,c $ and $ d $ are real numbers. 
\medskip \\
The M\"obius map has two fixed points counting with multiplicity. Denote these points by $ \alpha $ and $ \beta $. The real M\"obius maps are classified to the three different cases using fixed points. 
\begin{itemize}
\item If $ \alpha $ and $ \beta $ are real distinct numbers, the map is called real {\em hyperbolic} M\"obius map, \vspace{-5pt}
\item If $ \alpha = \beta $, then the map is called real {\em parabolic} M\"obius map, and \vspace{-5pt}
\item If $ \alpha $ and $ \beta $ are two distinct non-real complex numbers, then the map is called real {\em elliptic} M\"obius map.
\end{itemize}
\smallskip
M\"obius map $ x \mapsto \frac{ax + b}{cx + d} $ is the same as $ x \mapsto \frac{pax + pb}{pcx + pd} $ for all numbers $ p \neq 0 $. Thus we may assume that $ ad - bc = 1 $ when we choose $ p = \sqrt{ad - bc} $. Moreover, M\"obius map has the matrix representation 
$ \big(\begin{smallmatrix} a & b \\ c & d \end{smallmatrix} \big) $ under the condition $ ad - bc =1 $. Denote the matrix representation of the M\"obius map $ g $, by also $ g $ and its trace by $ \mathrm{tr}(g) $, which means $ a + d $. In the complex analysis or hyperbolic geometry, M\"obius maps with complex coefficients can be classified similarly with different method. For instance, see \cite{beardon}. 
M\"obius transformation in \eqref{eq-Mobius transformation} (with real or complex coefficients) for $ ad-bc =1 $ is classified as follows 
\begin{itemize}
\item If $ \mathrm{tr}(g) \in \R \setminus [-2,2] $, then $ g $ is called {\em hyperbolic},
\item If $ \mathrm{tr}(g) = \pm 2 $, then $ g $ is called {\em parabolic},
\item If $ \mathrm{tr}(g) \in (-2,2) $, then $ g $ is called {\em elliptic} and 
\item If $ \mathrm{tr}(g) \in \C \setminus \R $, then $ g $ is called {\em purely loxodromic}. 
\end{itemize}
The real M\"obius maps are also classified by the above notions. In this paper, we investigate Hyers-Ulam stability of elliptic M\"obius transformations. Other cases would appear in the forthcoming papers.

\section{No Hyers-Ulam stability with dense subset}

In this section we prove non-stability in the sense of Hyers-Ulam, which is not only for the elliptic M\"obius transformation but also for any function satisfying the assumption of the following Theorem \ref{thm-no hyers ulam stability}.

\begin{thm} \label{thm-no hyers ulam stability}
Let $ \{ b_n \}_{n\in \N_0} $ be the sequence in $ \R $ satisfying $ b_{n+1} = F(b_n) $ with a map $ F $ for $ n \in \N_0 $. Suppose that there exists a dense subset $ A $
of $ \R $ such that if $ b_0 \in A $, then the sequence $ \{ b_n \}_{n\in \N_0} $ is dense in $ \R $. Suppose also that $ \{ b_n \}_{n\in \N_0} $ has no periodic point. Then the sequence $ \{ b_n \}_{n\in \N_0} $ has no Hyers-Ulam stability. 
\end{thm}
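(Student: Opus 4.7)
The plan is to argue by contradiction. Assume the difference equation has Hyers--Ulam stability, witnessed by a bound $G(\varepsilon)\to 0$ as $\varepsilon\to 0$. I will construct, for each $\varepsilon>0$, an explicit $\varepsilon$-approximate solution whose range is a \emph{finite} set; this will force any tracking exact solution into a bounded region of $\R$, which clashes with the density hypothesis on $A$.

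First I build a periodic $\varepsilon$-approximate solution. Fix $\varepsilon>0$ and pick any $a_0\in A$. The density hypothesis guarantees that the orbit $\{F^n(a_0)\}_{n\ge 0}$ is dense in $\R$, so it returns within $\varepsilon$ of $a_0$; choose $N=N(\varepsilon)$ with $|F^N(a_0)-a_0|<\varepsilon$ and define the $N$-periodic sequence $a_n:=F^{n\bmod N}(a_0)$. The recurrence $a_{n+1}=F(a_n)$ holds exactly whenever $n\not\equiv N-1\pmod N$, and at the wrap-around indices the deviation is $|a_0-F^N(a_0)|<\varepsilon$. Hence $\{a_n\}$ is an $\varepsilon$-approximate solution with finite range $\{F^k(a_0):0\le k<N\}$.

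Next, invoke Hyers--Ulam stability to extract an exact solution $\{b_n\}$ with $|a_n-b_n|\le G(\varepsilon)$ for every $n$. This confines $\{b_n\}$ to the bounded set $\bigcup_{k=0}^{N-1}\overline{B(F^k(a_0),G(\varepsilon))}$. I then case-split on $b_0$. If $b_0\in A$, the density hypothesis makes $\{b_n\}$ dense in $\R$ and hence unbounded, contradicting the containment above. If $b_0\notin A$, the subsequence $\{b_{kN}\}_k$ is injective (since any repetition $b_{kN}=b_{mN}$ would give a periodic point, excluded by hypothesis) and lies in the compact ball $\overline{B(a_0,G(\varepsilon))}$, so it has an accumulation point $L$; continuity of $F^N$ together with $|b_{(k+1)N}-b_{kN}|\le 2G(\varepsilon)$ yields $|F^N(L)-L|\le 2G(\varepsilon)$, and a diagonal extraction along a sequence $\varepsilon_k\to 0$ is intended to produce an honest fixed point of some iterate of $F$, contradicting the no-periodic-points hypothesis.

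The main obstacle is the $b_0\notin A$ branch, because the period $N(\varepsilon)$ need not remain bounded as $\varepsilon$ shrinks; the diagonal extraction must be arranged so that the sequence of approximate $F^{N(\varepsilon_k)}$-fixed points collapses to a point that is genuinely periodic under $F$. In the intended application to elliptic M\"obius maps this subtlety disappears, because every real orbit is dense in $\hat{\R}$ and hence unbounded in $\R$, so the first branch of the case analysis alone closes the argument.
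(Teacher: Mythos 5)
Your first branch is exactly the paper's proof: starting from a point of $A$ whose orbit is dense, you close up the orbit after a near-return to obtain a periodic (hence finite-range, bounded) $\varepsilon$-approximate solution, and you contradict Hyers--Ulam stability by noting that a dense exact orbit cannot stay within $G(\varepsilon)$ of a bounded set. (The paper's only cosmetic difference is that it takes $a_0$ arbitrary and chooses $a_1\in A$ with $|F(a_0)-a_1|\le\varepsilon$, so that the conclusion holds for every initial term of the approximate solution.) The paper then stops: it simply asserts that the exact solution $\{b_n\}$ supplied by the stability hypothesis is dense in $\R$, with no discussion of whether its initial point lies in $A$.

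The case split you introduce is therefore a real issue that the paper glosses over, and your handling of the branch $b_0\notin A$ is, as you concede, not a proof. Concretely: the return time $N(\varepsilon)$ is in general unbounded as $\varepsilon\to 0$, so the approximate identities $|F^{N(\varepsilon_k)}(L_k)-L_k|\le 2G(\varepsilon_k)$ involve a different iterate of $F$ at each stage and no single iterate $F^m$ is ever pinned down; the accumulation points $L_k$ also vary with $k$, so even after a diagonal extraction you obtain no candidate for a genuine periodic point. Moreover the theorem assumes nothing about $F$ beyond being ``a map,'' so the continuity of $F^N$ that your argument invokes is an extra hypothesis. The clean ways to close the gap are either to strengthen the hypothesis to ``every non-periodic orbit is dense'' (which is what actually holds for the irrational-rotation conjugates in the intended application, and is evidently what the paper's one-line density assertion is tacitly using), or to read the theorem as a statement only about exact solutions with $b_0\in A$. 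As written, both your proposal and the paper's proof leave the branch $b_0\notin A$ unresolved; the difference is that you identify it explicitly.
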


\begin{proof}
For any $ a_0 \in \R $ and $ \varepsilon > 0 $, choose the sequence $ \{ a_n \}_{n\in \N_0} $ as follows
\begin{enumerate} 
\item $ a_0 \in \R $ is arbitrary, \vspace{-5pt}
\item $ a_1 $ satisfies that $ | F(a_0) - a_1| \leq \varepsilon $ and $ a_1 \in A $, that is, the sequence $ \{ F^n(a_1) \}_{n\in \N_0} $ is dense in $ \R $. 
\end{enumerate}
Let $ n_{k} $ for $ k \geq 1 $ be the positive numbers $ n_1 < n_2 < \cdots < n_k < \cdots $ such that $ F^{n_k}(a_1) $ are points in the ball of which center is $ a_1 $ and diameter is $ \varepsilon $. 
\begin{enumerate}
\item[3.] $ a_{n+1} = F^{n}(a_1) $ for $ n= 0,1,2, \ldots ,n_1 -1$,  \vspace{-5pt} 
\item[4.] $ a_{n_1+1} = a_1 $ \;and \;$ a_k = a_{k + n_1+1} $ for every $ k \in \N $. 
\end{enumerate}
Then the sequence $ \{ a_n \}_{n\in \N_0} $ satisfies that 
$ |a_{n+1} - F(a_n) | \leq \varepsilon $ for all $ n \in \N_0 $. Moreover, since the sequence $ \{ a_n \}_{n\in \N} $ is periodic, $ \{ a_n \}_{n\in \N_0} $  is the finite set and it is bounded. However, the fact that the sequence $ \{ b_n \}_{n\in \N_0} $ is dense in $ \R $ implies that $ | a_n - b_n | $ is unbounded for $ n \in \N $.
Hence, the sequence $ \{ b_n \}_{n\in \N_0} $ does not have Hyers-Ulam stability. 
\end{proof}
\medskip
\begin{rk}
Theorem \ref{thm-no hyers ulam stability} can be generalized to any metric space only if the definition of Hyers-Ulam stability is modified suitably. For instance, if $ F $ is the map from the metric space $ X $ to itself and $ | \cdot | $ is changed to the distance $ \mathrm{dist}( \cdot \,, \cdot) $ from the metric on $ X $, then we can define Hyers-Ulam stability on the metric space and Theorem \ref{thm-no hyers ulam stability} is applied to it. For example, the unit circle $ \mathbb{S}^1 $ is the metric space of which distance between two points defined from the minimal arc length connecting these two points. Then Hyers-Ulam stability on $ \mathbb{S}^1 $ can be defined. 
\end{rk}

\section{Real elliptic M\"obius transformation}  \label{sec-real elliptic map}

\begin{lem} \label{lem-real elliptic Mob transform}
Let $ g(x) = \frac{ax +b}{ cx +d} $ be the linear fractional map where $ a, b, c $ and $ d $ are real numbers, $ c \neq 0 $ and $ ad-bc =1 $. Then $ g $ has fixed points which are non-real complex numbers if and only if $ g $ is real elliptic M\"obius map, that is, $ -2 < a + d < 2 $. 
\end{lem}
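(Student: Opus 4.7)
The plan is to find the fixed points of $g$ explicitly by solving $g(x)=x$ and then examine when the resulting quadratic has non-real roots via its discriminant.

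First I would write the fixed point equation $\frac{ax+b}{cx+d} = x$ and clear denominators (legitimate since $c \neq 0$, so $x = \infty$ is not fixed) to obtain the quadratic
\begin{equation*}
cx^2 + (d-a)x - b = 0.
\end{equation*}
Its discriminant is $\Delta = (d-a)^2 + 4bc$. The fixed points are non-real complex numbers exactly when $\Delta < 0$.

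Next I would simplify $\Delta$ using the normalization $ad - bc = 1$, i.e.\ $bc = ad - 1$. Substituting,
\begin{equation*}
\Delta = (a-d)^2 + 4(ad - 1) = a^2 + 2ad + d^2 - 4 = (a+d)^2 - 4 = \mathrm{tr}(g)^2 - 4.
\end{equation*}
Hence $\Delta < 0$ if and only if $(a+d)^2 < 4$, i.e.\ $-2 < a+d < 2$, which is exactly the elliptic condition. This proves both directions simultaneously.

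There is no real obstacle here; the argument is a one-line discriminant computation, and the only mild subtlety is remembering that because $c\neq 0$ the quadratic is genuinely of degree two (so both fixed points come from this equation, and $\infty$ is not a fixed point). I would also note in passing that when $-2<a+d<2$ the two roots are complex conjugates of one another, consistent with the classification stated in the introduction.
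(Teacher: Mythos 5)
Your proposal is correct and is essentially the paper's own argument: both solve the fixed-point quadratic and observe that its discriminant simplifies to $(a+d)^2-4$ under the normalization $ad-bc=1$, so the roots are non-real exactly when $-2<a+d<2$. The paper just states the root formula directly, while you spell out the discriminant computation; there is no substantive difference.
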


\begin{proof}
The equation $ g(x) = x $ implies that 
$$ x = \frac{a-d \pm \sqrt{(a+d)^2 - 4}}{2c} . $$
Then the fixed points are non-real complex numbers if and only if $ -2 < a + d < 2 $. 
\end{proof}
%

\begin{lem} \label{lem-conjugation and rotation}
Let $ g $ be the map defined on $ \hat{\C} $ in Lemma \ref{lem-real elliptic Mob transform} and two complex numbers $ \alpha $ and its complex conjugate $ \bar{\alpha} $ be the fixed points of $ g $. Let $ h $ be the map defined as $ h(x) = \frac{x - \alpha}{x - \bar{\alpha}} $. If $ g $ is the elliptic M\"obius map, that is, $ -2 < a + d < 2 $, then 
\begin{align*}
h \circ g \circ h^{-1} (x) = \frac{x}{(c\alpha + d)^2} .
\end{align*} 
for $ x \in \hat{\C} $. Moreover, $ | g'(\alpha)| = | c\alpha + d| = 1 $. 
\end{lem}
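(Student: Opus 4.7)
The plan is to exploit the fact that $h$ is a Möbius map sending $\alpha \mapsto 0$ and $\bar\alpha \mapsto \infty$. Consequently the conjugate $\tilde g := h \circ g \circ h^{-1}$ is itself a Möbius transformation that fixes both $0$ and $\infty$, and hence must be of the form $\tilde g(x) = \lambda\,x$ for some constant $\lambda \in \C^{*}$. This reduces the problem to identifying the multiplier $\lambda$.

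To compute $\lambda$, I would use that the derivative at a fixed point is a conjugacy invariant, so $\lambda = \tilde g'(0) = g'(\alpha)$. Direct differentiation of $g$ gives
\[
g'(x) \;=\; \frac{a(cx+d) - c(ax+b)}{(cx+d)^{2}} \;=\; \frac{ad-bc}{(cx+d)^{2}} \;=\; \frac{1}{(cx+d)^{2}},
\]
so that $\lambda = g'(\alpha) = 1/(c\alpha+d)^{2}$, yielding the claimed formula $h \circ g \circ h^{-1}(x) = x/(c\alpha+d)^{2}$. Alternatively, one can verify the formula by straightforward algebraic substitution into $h^{-1}(x) = (\bar\alpha x - \alpha)/(x-1)$, but the fixed-point argument is cleaner.

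For the second assertion $|c\alpha+d|=1$, I would use the fixed-point equation $g(\alpha)=\alpha$, which rearranges into the quadratic
\[
c\alpha^{2} + (d-a)\alpha - b = 0.
\]
By Lemma~\ref{lem-real elliptic Mob transform}, the two roots of this real quadratic are precisely $\alpha$ and $\bar\alpha$. Vieta's formulas then give $\alpha + \bar\alpha = (a-d)/c$ and $\alpha\bar\alpha = -b/c$. Substituting into
\[
|c\alpha+d|^{2} = (c\alpha+d)(c\bar\alpha+d) = c^{2}\alpha\bar\alpha + cd(\alpha+\bar\alpha) + d^{2}
\]
produces $-bc + d(a-d) + d^{2} = ad - bc = 1$. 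Combined with $g'(\alpha) = 1/(c\alpha+d)^{2}$, this simultaneously gives $|c\alpha+d|=1$ and $|g'(\alpha)|=1$.

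There is no real obstacle here; the only subtlety is recognizing that the Vieta computation collapses the modulus $|c\alpha+d|^{2}$ to the determinant $ad-bc=1$. The key conceptual point is that conjugation by $h$ normalizes the elliptic map to a pure rotation/dilation about $0$, and the normalizing constant is forced to have modulus one precisely because the coefficients of $g$ are real.
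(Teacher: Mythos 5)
Your proposal is correct and follows essentially the same route as the paper: both arguments note that $h\circ g\circ h^{-1}$ is a M\"obius map fixing $0$ and $\infty$, hence a dilation $x\mapsto \lambda x$ with $\lambda = g'(\alpha) = 1/(c\alpha+d)^2$ (your appeal to conjugacy-invariance of the multiplier is exactly what the paper's chain-rule computation $h'(g(x))g'(x)=kh'(x)$ establishes), and both then evaluate $|c\alpha+d|^2=(c\alpha+d)(c\bar\alpha+d)$ via Vieta's formulas for the real fixed-point quadratic to get $ad-bc=1$. No gaps.
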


\begin{proof}
The map $ h \circ g \circ h^{-1} $ has the fixed points $ 0 $ and $ \infty $. Since both $ g $ and $ h $ are linear fractional maps, so is $ h \circ g \circ h^{-1} $. Then $ h \circ g \circ h^{-1} (x) = kx $ for some $ k \in \C $. The equation $ h \circ g(x) = kh(x) $ implies that $ h'(g(x))g'(x) = kh'(x) $. Thus 
\begin{align*}
h'(g(\alpha))g'(\alpha) = h'(\alpha)g'(\alpha) = kh'(\alpha)
\end{align*}
Then $ k = g'(\alpha)=\frac{1}{(c\alpha + d)^2} $. Moreover, $ |g'(\alpha)| = 1 $ if and only if $ |c\alpha + d| = 1 $. Recall that both $ \alpha $ and $ \bar{\alpha} $ are roots of the equation, $ cx^2 - (a-d)x -b = 0 $. Then
\begin{align*}
|c\alpha + d|^2 &= (c\alpha + d)(c\bar{\alpha} + d)  \\
&= c^2 \alpha \bar{\alpha} + cd (\alpha + \bar{\alpha}) +d^2 \\
&= c^2 \left(- \frac{b}{c} \right) + cd \,\frac{a-d}{c} + d^2 \\
&= -bc + ad - d^2 + d^2 \\
&= ad-bc \\
&= 1 .
\end{align*}
Hence, $ | g'(\alpha)| = \frac{1}{|c\alpha + d|^2} = 1 $. 
\end{proof}
\smallskip
By Lemma \ref{lem-conjugation and rotation}, the map $ h \circ g \circ h^{-1} $ is a rotation on $ \mathbb{S}^1 $. Since $ h^{-1} $ is bijective from $ \mathbb{S}^1 \setminus \{1\} $ to $ \R $, $ x $ is a periodic point under $ h \circ g \circ h^{-1} $ in $ \mathbb{S}^1 \setminus \{1\} $ with period $ p $ if and only if $ h^{-1}(x) $ is periodic in $ \R $ with the same period. 
Recall that $ h^{-1}(1) = \infty $. Thus when we investigate Hyers-Ulam stability of the sequence $ \{ b_n \}_{n \in \N_0} $ as the solution of the elliptic linear fractional map $ g $, we have to choose carefully the initial point $ b_0 \in \R $ satisfying $ g^k(b_0) \neq \infty $ for all $ k \in \N $. 

\medskip
\begin{prop} \label{prop-density of elliptic Mobius acc points}
Let $ g $ be the elliptic linear fractional map defined in Lemma \ref{lem-real elliptic Mob transform} on $ \hat{\R} $. Suppose that there exists $ x \in \R $ such that $ g^k(x) \neq x $ for all $ k \in \N $. Then the sequence $ \{g^k(x) \}_{k \in \N} $ is dense in $ \R $ where $ x \in \R \setminus \{ g^{-k}(\infty) \}_{k \in \N} $. 
\end{prop}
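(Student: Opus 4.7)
The plan is to transport the dynamics of $ g $ on $ \hat{\R} $ to a rotation of $ \mathbb{S}^1 $ via the conjugacy $ h $ of Lemma \ref{lem-conjugation and rotation}, apply the classical density theorem for irrational rotations, and pull the resulting density back to $ \R $.

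First I would record that $ h(z) = (z - \alpha)/(z - \bar\alpha) $ maps $ \hat{\R} $ bijectively onto $ \mathbb{S}^1 $, sending $ \infty \mapsto 1 $. This is because $ |t-\alpha| = |t - \bar\alpha| $ for every $ t \in \R $, so $ |h(t)| = 1 $; injectivity is inherited from the Möbius map $ h $, and the image is a closed connected curve in $ \mathbb{S}^1 $, hence all of $ \mathbb{S}^1 $. Setting $ \lambda := 1/(c\alpha + d)^2 $, Lemma \ref{lem-conjugation and rotation} gives the conjugacy identity $ h \circ g \circ h^{-1}(w) = \lambda w $ together with $ |\lambda| = 1 $.

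Next, fix $ x \in \R \setminus \{g^{-k}(\infty)\}_{k \in \N} $ with $ g^k(x) \neq x $ for every $ k \in \N $, and set $ w_0 := h(x) $. Since $ g^k(x) \in \R $ for all $ k $, the entire orbit $ \{\lambda^k w_0\}_{k \in \N} $ lies in $ \mathbb{S}^1 \setminus \{1\} $; since $ \alpha \notin \R $ we also have $ w_0 \neq 0 $. The non-periodicity hypothesis $ g^k(x) \neq x $ translates to $ \lambda^k w_0 \neq w_0 $, hence $ \lambda^k \neq 1 $ for every $ k \in \N $, so $ \lambda $ is not a root of unity. The classical Jacobi/Weyl density theorem for irrational rotations then yields that $ \{\lambda^k w_0\}_{k \in \N} $ is dense in $ \mathbb{S}^1 $.

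Finally, I pull density back through $ h^{-1} $. Given any target $ y \in \R $, the point $ h(y) $ lies in $ \mathbb{S}^1 \setminus \{1\} $; by density, some subsequence satisfies $ \lambda^{k_j} w_0 \to h(y) $. Because $ h^{-1} $ is a homeomorphism of $ \mathbb{S}^1 \setminus \{1\} $ onto $ \R $, continuity gives $ g^{k_j}(x) = h^{-1}(\lambda^{k_j} w_0) \to h^{-1}(h(y)) = y $, proving density of $ \{g^k(x)\}_{k \in \N} $ in $ \R $. The proof is essentially a translation through the conjugacy, and the only delicate point to monitor is the behaviour at $ 1 \in \mathbb{S}^1 $ (corresponding to $ \infty \in \hat{\R} $): the orbit of $ w_0 $ must avoid $ 1 $, which is exactly ensured by the exclusion $ x \notin \{g^{-k}(\infty)\}_{k \in \N} $, while the approximated targets automatically avoid $ 1 $ since we take $ y \in \R $. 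Everything else is immediate from Lemma \ref{lem-conjugation and rotation} and the standard density of an irrational rotation's orbit on $ \mathbb{S}^1 $.
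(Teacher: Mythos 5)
Your proposal is correct and follows essentially the same route as the paper: conjugate $g$ to a rotation via $h$, deduce from the non-periodicity hypothesis that the rotation is irrational, invoke the classical density of irrational rotation orbits on $\mathbb{S}^1$, and pull density back to $\R$ through $h^{-1}$. The only cosmetic difference is that the paper justifies the pullback step with an explicit computation bounding $|h^{-1}(p)-h^{-1}(p')|$ in terms of $|p-p'|$, whereas you simply cite that $h^{-1}$ is a homeomorphism of $\mathbb{S}^1\setminus\{1\}$ onto $\R$; both are valid.
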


\begin{proof}
Lemma \ref{lem-conjugation and rotation} implies that $ h \circ g \circ h^{-1} (x) = e^{i\theta} x $ for some $ \theta \in \R $. If $ \theta $ is a rational number $ \frac{q}{p} $, then $ h \circ g^p \circ h^{-1} (x) = x $ for all $ x \in \C $. Thus $ g^p(x) = x $ for all $ x \in \C $. Then $ \theta $ is an irrational number. Since $ x \mapsto e^{i\theta} x $ is an irrational rotation on $ \mathbb{S}^1 $, the sequence $ \{ e^{ik\theta}x \}_{k \in \N } $ is dense in $ \mathbb{S}^1 $ for every $ x \in \mathbb{S}^1 $. Moreover, when $ x=1 $ is chosen, the set $ \mathbb{S}^1 \setminus \{ e^{-ik\theta} \}_{k \in \N_0 } $ is also a dense subset of $ \mathbb{S}^1 $.
\smallskip \\
The direct calculation implies that $ h^{-1}(x) = \frac{\bar{\alpha}x - \alpha}{x-1} $ and then $ h^{-1}(1) = \infty $. Choose two points $ p $ and $ p' $ close enough to each other in $ \mathbb{S}^1 \setminus \{ e^{-ik\theta} \}_{k \in \N_0 } $. Then
\begin{align} \label{eq-distance between points 1}
| h^{-1}(p) - h^{-1}(p') | &= \left| \frac{\bar{\alpha}p - \alpha}{p-1} - \frac{\bar{\alpha}p' - \alpha}{p'-1} \right| \nonumber \\[0.2em] 
&= \left| \frac{(\bar{\alpha}p - \alpha)(p'-1) - (\bar{\alpha}p' - \alpha)(p-1)}{(p-1)(p'-1)} \right|  \nonumber \\[0.2em] 
&= \left| \frac{(\alpha - \bar{\alpha})(p-p')}{(p-1)(p'-1)} \right|   \nonumber \\[0.2em] 
&= \left| \frac{\alpha - \bar{\alpha}}{(p-1)(p'-1)} \right| \cdot |p - p'|  .
\end{align}
We choose the sequence $ \{ p_{n_k}\}_{k \in \N_0 } $ for some $ p_0 \in \mathbb{S}^1 \setminus \{ e^{-ik\theta} \}_{k \in \N_0 } $ which satisfies that
$ h \circ g^{n_k} \circ h^{-1}(p_0) = p_{n_k} $ and $ p_{n_k} \rightarrow p $ as $ k \rightarrow \infty $ for different numbers $ n_1 < n_2 < \cdots < n_k < \cdots $. Since $ h^{-1} $ is a bijection from $ \mathbb{S}^1 \setminus \{ 1\} $ to $ \R $, by the equation \eqref{eq-distance between points 1} we obtain that 
\begin{align*}
| h^{-1}(p) - g^{n_k} \circ h^{-1}(p_0)| &= | h^{-1}(p) - h^{-1}(p_{n_k}) | \\[0.2em]
& \leq \left( \left| \frac{\alpha - \bar{\alpha}}{(p-1)^2}\right|  + \delta \right) |p - p_{n_k}| 
\end{align*}
for some $ \delta > 0 $. Then any point $ h^{-1}(p) $ in $ \R $ is an accumulation point in the sequence $ \{ g^k(x)  \}_{ k \in \N_0} $ where $ x \in h^{-1} \big(\mathbb{S}^1 \setminus \{ e^{-ik\theta} \}_{k \in \N_0 } \big) $, which is a dense subset of $ \R $. 
\end{proof}

Theorem \ref{thm-no hyers ulam stability} and Proposition \ref{prop-density of elliptic Mobius acc points} imply that the real elliptic linear fractional map does not have Hyers-Ulam stability. 

\smallskip

\begin{cor}  \label{cor-no stability of irrational rot1}
Let $ g(x) = \frac{ax +b}{cx +d} $ be the linear fractional map on $ \hat{\R} $ where $ a, b, c $ and $ d $ are real numbers, $ c \neq 0 $ and $ ad-bc =1 $. Suppose that $ -2 < a + d < 2 $ and $ h \circ g \circ h^{-1} $ is an irrational rotation on the unit circle where $ h(x) = \frac{x - \alpha}{x - \bar{\alpha}} $. If the sequence $ \{ b_n \}_{n \in \N_0} $ in $ \R $ is the solution of $ b_{n+1} = g(b_n) $ for $ n \in \N_0 $, then either $ g^k(b_0) = \infty $ for some $ k \in \N $ or $ \{ b_n \}_{n \in \N_0} $ has no Hyers-Ulam stability . 
\end{cor}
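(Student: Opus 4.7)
The strategy is to invoke Theorem \ref{thm-no hyers ulam stability} with $F = g$, using Proposition \ref{prop-density of elliptic Mobius acc points} to supply the density hypothesis. Two alternatives appear in the statement, so first I would dispose of the trivial one: if $g^k(b_0) = \infty$ for some $k \in \N$, there is nothing further to prove. Hence I assume $g^k(b_0) \neq \infty$ for every $k \in \N$, so that $\{b_n\}_{n \in \N_0}$ is a genuine sequence in $\R$.

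Next I would take
\[
A \;=\; \R \setminus \{ g^{-k}(\infty) \}_{k \in \N},
\]
which is the complement of a countable set in $\R$ and therefore a dense subset of $\R$. For any $x \in A$ the iterate $g^k(x)$ is finite for every $k \in \N_0$, and by Proposition \ref{prop-density of elliptic Mobius acc points} the orbit $\{g^k(x)\}_{k \in \N_0}$ is then dense in $\R$ provided $x$ is not periodic under $g$.

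To check the no-periodic-points hypothesis of Theorem \ref{thm-no hyers ulam stability}, I would appeal to Lemma \ref{lem-conjugation and rotation}: the conjugate $h \circ g \circ h^{-1}(z) = e^{i\theta} z$ is assumed to be an irrational rotation, so $e^{ik\theta} \neq 1$ for every $k \in \N$. If $g^k(y) = y$ for some $y \in \R$, applying $h$ gives $e^{ik\theta} h(y) = h(y)$, forcing $h(y) = 0$, i.e., $y = \alpha \notin \R$, a contradiction. In particular $\{b_n\}$ has no periodic point.

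Both hypotheses of Theorem \ref{thm-no hyers ulam stability} are now in place, and it yields that $\{b_n\}_{n \in \N_0}$ has no Hyers-Ulam stability. The only real obstacle is the bookkeeping around $\infty$: the construction inside Theorem \ref{thm-no hyers ulam stability} requires selecting $a_1 \in A$ within $\varepsilon$ of $g(a_0)$, which is possible by density of $A$, and thereafter every iterate $g^n(a_1)$ stays in $\R$ by definition of $A$, so the whole argument takes place inside $\R$ exactly as the theorem demands.
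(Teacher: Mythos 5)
Your proposal is correct and follows exactly the route the paper intends: the corollary is presented there as an immediate consequence of Theorem \ref{thm-no hyers ulam stability} combined with Proposition \ref{prop-density of elliptic Mobius acc points}, with no further argument written out. Your choice of $A = \R \setminus \{g^{-k}(\infty)\}_{k\in\N}$ and your verification that the irrational-rotation hypothesis rules out real periodic points (since $h(y)=0$ forces $y=\alpha\notin\R$) supply precisely the details the paper leaves implicit.
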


\section{Extension of non stability to complex plane}

In this section, we extend no Hyers-Ulam stability of the real elliptic linear fractional map to the elliptic M\"obius transformation with complex coefficients. 
Let $ \ell $ be the straight line in the complex plane. Define the extended line as $ \ell \cup \{ \infty \} $ and denote it by $ \hat{\ell} $. Interior of the circle $ C $ in $ \C $ means that the bounded region of the set $ \C \setminus C $. 

\begin{lem} \label{lem-elliptic Mob transform}
Let $ f(z) = \frac{az + b}{cz + d} $ be the M\"obius map with complex coefficients $ a,b,c $ and $ d $ for $ c \neq 0 $ and $ ad - bc = 1 $. If $ f $ is elliptic, that is, $ -2 < a + d < 2 $, then there exists the unique extended line which is invariant under $ g $ in $ \hat{\C} $. 
\end{lem}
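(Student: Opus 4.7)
The strategy is to conjugate $f$ to a Euclidean rotation of $\hat{\C}$ and then classify invariant extended lines by pulling back the invariant generalized circles of that rotation. First I would locate the two fixed points $\alpha,\beta$ of $f$: solving $f(z)=z$ gives $cz^2+(d-a)z-b=0$, whose discriminant is $(a+d)^2-4$. Under the hypothesis $-2<a+d<2$ this is a negative real number, so $\alpha\neq\beta$. Note that when the coefficients are not real, $\beta$ need not equal $\bar\alpha$, so the symmetry argument of Lemma~\ref{lem-conjugation and rotation} must be reworked.

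I would then repeat the construction of Lemma~\ref{lem-conjugation and rotation} in the complex-coefficient setting. With $h(z)=(z-\alpha)/(z-\beta)$, the map $\tilde f:=h\circ f\circ h^{-1}$ fixes $0$ and $\infty$, hence has the form $\tilde f(w)=\lambda w$ with $\lambda=1/(c\alpha+d)^2$. Vieta's formulas on $cz^2+(d-a)z-b=0$ still yield $(c\alpha+d)(c\beta+d)=ad-bc=1$, but this only gives $\lambda\cdot\lambda^{-1}=1$, not $|\lambda|=1$. To upgrade I would use the trace identity $\lambda+\lambda^{-1}+2=(a+d)^2\in[0,4)$: the right side is a real number, so $\lambda+\lambda^{-1}$ is real in $[-2,2]$, and combined with $\lambda\cdot\lambda^{-1}=1$ this forces $\lambda=e^{i\theta}$ on the unit circle. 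Thus $\tilde f$ is a genuine Euclidean rotation of $\hat{\C}$ about $0$.

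Finally, since $h$ is a Möbius map it carries generalized circles to generalized circles and satisfies $h(\infty)=1$, so invariant extended lines of $f$ correspond bijectively to $\tilde f$-invariant generalized circles passing through the point $1$. The concentric circles $\{|w|=r\}_{r>0}$ are all $\tilde f$-invariant, and the unique one through $1$ is $\{|w|=1\}$; its $h$-preimage is $\{z\in\C:|z-\alpha|=|z-\beta|\}\cup\{\infty\}$, the perpendicular bisector of the segment $[\alpha,\beta]$, which establishes existence. The principal obstacle is the uniqueness clause: in the generic case $\theta\notin\pi\Z$ the only $\tilde f$-invariant generalized circles are the concentric ones and uniqueness is immediate, but in the degenerate case $\theta=\pi$ (equivalently $a+d=0$) every line through $0$ in the $w$-plane is also $\tilde f$-invariant — the real axis among them passes through $1$ and its $h$-preimage is the extended line through $\alpha,\beta$. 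A separate argument, or an additional standing assumption, would therefore be needed to handle this edge case cleanly, and that is where I expect most of the care to be required.
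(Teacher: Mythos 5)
Your argument is correct and takes a genuinely different route from the paper's. The paper writes down the candidate line $\ell=\{z:|z-\alpha|=|z-\beta|\}$ explicitly and verifies invariance by direct computation: from the explicit formulas for $\alpha,\beta$ and the fact that $\sqrt{(a+d)^2-4}$ is purely imaginary it deduces $|c\alpha+d|=|c\beta+d|$, and then the identity $f(z)-\alpha=\frac{z-\alpha}{(cz+d)(c\alpha+d)}$ (and its analogue for $\beta$) shows that $|z-\alpha|=|z-\beta|$ is preserved. For uniqueness it observes that an invariant extended line contains $\infty$, hence must contain $f(\infty)=\frac{a}{c}$ and $f^{-1}(\infty)=-\frac{d}{c}$, and two points determine a line. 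Your conjugation $h(z)=(z-\alpha)/(z-\beta)$ to a rotation $w\mapsto\lambda w$, with $|\lambda|=1$ extracted from the trace identity $\lambda+\lambda^{-1}+2=(a+d)^2$, together with the classification of rotation-invariant generalized circles through $h(\infty)=1$, is essentially the content of the paper's Remark \ref{rk-concentric circles} promoted to a proof; it makes the geometry more transparent and handles existence and uniqueness in one stroke in the generic case.

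The degenerate case you flag is not merely a place requiring extra care: it is a genuine gap in the paper's proof, and indeed a counterexample to the uniqueness claim as stated. When $a+d=0$ the two points $\frac{a}{c}$ and $-\frac{d}{c}$ coincide, so the paper's ``two points determine a line'' step collapses, and the conclusion actually fails. Concretely, for $f(z)=-1/z$ (so $a=d=0$, $b=-1$, $c=1$, $ad-bc=1$, trace $0\in(-2,2)$) the fixed points are $\pm i$, and both the extended real axis (the perpendicular bisector of the fixed points) and the extended imaginary axis (the line through the fixed points) are invariant, exactly as your $\theta=\pi$ analysis predicts. So the lemma requires the additional hypothesis $a+d\neq 0$ for the uniqueness clause (existence is unaffected), and with that hypothesis your proof is complete; fortunately the later applications only use existence of an invariant extended line, not its uniqueness.
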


\begin{proof}
Let $ \alpha $ and $ \beta $ be the fixed points of $ f $. In particular, the fixed points of $ g $ are as follows 
\begin{align*}
\alpha = \frac{a-d + \sqrt{(a+d)^2 - 4}}{2c}, \quad \beta = \frac{a-d - \sqrt{(a+d)^2 - 4}}{2c}
\end{align*}
Denote the straight line, \smallskip $ \{ z \in \C : |z - \alpha | = |z - \beta | \} $ by $ \ell $ in $ \C $ and denote the extended line $ \ell \cup \{ \infty \} $ by $ \hat{\ell} $. 
We prove that 
$ \hat{\ell} 
 $ is the unique invariant extended line under $ f $.
\medskip \\
{\em Claim} : The points, $ -\frac{d}{c} $ and $ \frac{a}{c} $ are in $ \hat{\ell} $. 
\begin{align*}
\alpha + \frac{d}{c} = \frac{a + d + \sqrt{(a+d)^2 - 4}}{2c}, \quad \beta + \frac{d}{c} = \frac{a + d - \sqrt{(a+d)^2 - 4}}{2c}
\end{align*}
The fact that $ a+d $ is a real number and $ \sqrt{(a+d)^2 - 4} $ is a purely imaginary number implies that 
\begin{align} \label{eq-same absolute value of some points}
\left| \alpha + \frac{d}{c} \right | = \left| \beta + \frac{d}{c} \right | = \frac{1}{|c|} .
\end{align}
Then $ -\frac{d}{c} $ is in $ \hat{\ell} $. By similar calculation, $ \frac{a}{c} $ is also in $ \hat{\ell} $. The proof the claim is complete. 
\medskip \\
For the invariance of $ \hat{\ell} $, it suffice to show that if $ z \in {\ell} \setminus \{-\frac{d}{c} \} $, then $ f(z) \in \ell $. For any $ z \in {\ell} \setminus \{-\frac{d}{c} \} $, we have that
\begin{align} \label{eq-invariance of extended line 1}
f(z) - \alpha &= f(z) - f(\alpha) \nonumber \\
&= \frac{az + b}{cz + d} - \frac{a\alpha + b}{c\alpha + d} 
= \frac{(ad-bc)(z- \alpha)}{(cz + d)(c\alpha + d)} 
= \frac{z- \alpha}{(cz + d)(c\alpha + d)}
\end{align}
By similar calculation, we obtain that 
\begin{align} \label{eq-invariance of extended line 2}
f(z) - \beta = \frac{z- \beta}{(cz + d)(c\beta + d)} .
\end{align}
The equation \eqref{eq-same absolute value of some points} in the claim implies that $ | c\alpha + d| = | c\beta + d| $. The definition of $ \ell $ implies that $ cz + d \neq 0 $ for $ z \in {\ell} \setminus \{-\frac{d}{c} \} $ and $ |z - \alpha | = |z - \beta | $. The equation $ |f(z) - \alpha| = |f(z) - \beta| $ holds by comparing the equation \eqref{eq-invariance of extended line 1} with \eqref{eq-invariance of extended line 2}. Hence, by this result and the above claim, the extended line $ \hat{\ell} $ is invariant under $ f $. There is the unique straight line which connects $ -\frac{d}{c} $ and $ \frac{a}{c} $ in $ \C $. Hence, $ \hat{\ell} $ is the unique invariant extended line in $ \hat{\C} $ under $ f $. 
\end{proof}

\smallskip
\begin{rk} \label{rk-concentric circles}
Define the map $ h $ as $ h(z) = \frac{z - \beta}{z - \alpha} $. Then by the straightforward calculation we obtain $ h \circ f \circ h^{-1}(z) = f'(\alpha) z $ and $ |f'(\alpha)| = 1 $. Let $ C_r $ be the circle of radius $ r>0 $ of which center is the origin in $ \C $. Observe that a fixed point of $ f $ is contained in the interior of $ h^{-1}(C_r) $ for all $ r>0 $. If $ h \circ g \circ h^{-1} $ is an irrational rotation in $ \C $, then every concentric circles $ C_r $ are invariant under $ h \circ f \circ h^{-1} $. Then $ h^{-1}(C_r) $ is an invariant circle under $ f $ for every $ r > 0 $. However, since $ h(\infty) = 1 $, the unique invariant extended line under $ f $ is $ h^{-1}(\mathbb{S}^1) $. Moreover, $ h^{-1}(C_{1/r}) $ is the reflected image of $ h^{-1}(C_r) $ to the invariant extended line. 
\end{rk}

Due to the existence of the extended line which is invariant under $ g $, no Hyers-Ulam stability of the sequence $ \{ g^n(z) \}_{n \in \N_0} $ of the {\em real} elliptic linear fractional map is extendible to that of the elliptic M\"obius transformation.

\begin{prop} \label{prop-density of elliptic Mobius acc points 2}
Let $ f $ be the elliptic M\"obius transformation on $ \hat{\C} $ and $ \hat{\ell} $ be the extended line invariant under $ f $. Suppose that there exists $ z \in \hat{\ell} $  such that $ f^k(z) \neq z $ for all $ k \in \N $. Then the sequence $ \{f^k(z) \}_{k \in \N} $ is dense in $ \ell = \hat{\ell} \setminus \{ \infty\} $ where $ z \in \ell \setminus \{ f^{-k}(\infty) \}_{k \in \N} $. 
\end{prop}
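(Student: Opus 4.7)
The strategy is to reduce the complex case to an irrational rotation on $\mathbb{S}^1$ via the conjugation from Remark \ref{rk-concentric circles}, mimicking the argument of Proposition \ref{prop-density of elliptic Mobius acc points}. The essential new input is that the extended line $\hat{\ell}$ is carried by $h(z) = \frac{z-\beta}{z-\alpha}$ onto the unit circle $\mathbb{S}^1$, so the dynamics of $f$ restricted to $\hat{\ell}$ is conjugate to rotation on $\mathbb{S}^1$.

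First, I would invoke Remark \ref{rk-concentric circles} to write $h \circ f \circ h^{-1}(w) = f'(\alpha)\, w$ with $|f'(\alpha)|=1$, and identify $\hat{\ell}$ with $h^{-1}(\mathbb{S}^1)$. In particular $h(\infty)=1 \in \mathbb{S}^1$, so $h$ restricts to a bijection from $\ell = \hat{\ell}\setminus\{\infty\}$ onto $\mathbb{S}^1 \setminus \{1\}$. Writing $f'(\alpha) = e^{i\theta}$, the orbit $\{f^k(z)\}_{k \in \N}$ corresponds under $h$ to $\{e^{ik\theta} h(z)\}_{k \in \N}$ on $\mathbb{S}^1$.

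Next I would argue that $\theta$ must be an irrational multiple of $2\pi$. Indeed, if $\theta/(2\pi) = q/p$ were rational, then $h \circ f^p \circ h^{-1}$ would be the identity on $\hat{\C}$, forcing $f^p(z) = z$ for every $z \in \hat{\C}$, contradicting the hypothesis that $f^k(z) \neq z$ for all $k \in \N$. Hence $w \mapsto e^{i\theta} w$ is an irrational rotation on $\mathbb{S}^1$, so $\{e^{ik\theta}h(z)\}_{k \in \N}$ is dense in $\mathbb{S}^1$ for every $z$ with $h(z) \in \mathbb{S}^1$, and in particular its intersection with the dense subset $\mathbb{S}^1 \setminus \{e^{-ik\theta}\}_{k \in \N_0}$ (the preimages of $1$ under iterates, i.e.\ the images of $f^{-k}(\infty)$) is itself dense in $\mathbb{S}^1$.

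Finally, I would transfer density from $\mathbb{S}^1 \setminus \{1\}$ to $\ell$ through $h^{-1}$. Computing $h^{-1}(w) = \frac{\alpha w - \beta}{w-1}$ and running the same algebraic manipulation as in \eqref{eq-distance between points 1}, I obtain for $p, p'$ near a fixed $p \in \mathbb{S}^1 \setminus \{1\}$
\begin{align*}
| h^{-1}(p) - h^{-1}(p') | = \left| \frac{\alpha - \beta}{(p-1)(p'-1)} \right| \cdot |p - p'|.
\end{align*}
Given any target $p \in \mathbb{S}^1 \setminus \{1, e^{-ik\theta} : k \in \N_0\}$ and a sequence $p_{n_k} = e^{in_k\theta}h(z) \to p$ produced by irrational rotation, this estimate gives $|h^{-1}(p) - f^{n_k}(z)| \to 0$. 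Since such $p$ form a dense subset of $\mathbb{S}^1$ and $h^{-1}$ is a bijection from $\mathbb{S}^1 \setminus \{1\}$ onto $\ell$, the orbit $\{f^k(z)\}_{k \in \N}$ is dense in $\ell$ whenever $z \in \ell \setminus \{f^{-k}(\infty)\}_{k \in \N}$, completing the proof.

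The only mild obstacle is handling the singularity of $h^{-1}$ at $1$: the local Lipschitz constant blows up as $p \to 1$, but since we only need density at finite points $h^{-1}(p)$ with $p \neq 1$, the bounded-factor argument above suffices exactly as in the real case.
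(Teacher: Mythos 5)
Your proposal is correct and follows essentially the same route as the paper: the paper's own proof simply says to replace $\bar{\alpha}$ by $\beta$ in the proof of Proposition \ref{prop-density of elliptic Mobius acc points} and repeat the calculation, which is exactly what you carry out (conjugation of $\hat{\ell}$ onto $\mathbb{S}^1$ via $h$, irrationality of the rotation from the no-periodic-point hypothesis, and transfer of density through the identity $|h^{-1}(p)-h^{-1}(p')| = \bigl|\tfrac{\alpha-\beta}{(p-1)(p'-1)}\bigr|\,|p-p'|$). Your write-up is in fact more explicit than the paper's, but there is no substantive difference in method.
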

\begin{proof}
Lemma \ref{lem-elliptic Mob transform} implies that there exists the invariant extended line under $ f $. In the proof of Proposition \ref{prop-density of elliptic Mobius acc points}, replace $ \bar{\alpha} $ by $ \beta $ and apply the proof of Proposition \ref{prop-density of elliptic Mobius acc points}. Then the similar calculation completes the proof. 
\end{proof}

Then Proposition \ref{prop-density of elliptic Mobius acc points 2} and Theorem \ref{thm-no hyers ulam stability} implies that no Hyers-Ulam stability of $ \{ f^n(z) \}_{n \in \N_0} $ where $ f $ is the elliptic M\"obius map for $ z \in \C $.

\begin{cor} \label{cor-no stability of irrational rot2}
Let $ f(z) = \frac{az +b}{cz +d} $ be the M\"obius map on $ \hat{\C} $ where $ c \neq 0 $ and $ ad-bc =1 $. If $ f $ is elliptic, that is, $ -2 < a + d < 2 $, then the sequence $ \{ b_n \}_{n \in \N_0} $ satisfying $ b_{n+1} = f(b_n) $ for $ n \in \N_0 $ has no Hyers-Ulam stability on $ \C $. 
\end{cor}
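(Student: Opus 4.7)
The plan is to reduce the corollary to Theorem~\ref{thm-no hyers ulam stability} applied to $f$ restricted to the invariant extended line $\hat\ell$ from Lemma~\ref{lem-elliptic Mob transform}, and then to bridge from no-stability on $\ell$ to no-stability on $\C$ by ruling out the extra exact orbits that become available once the ambient space is $\C$.

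First I would note that $\ell$, with its Euclidean metric, is isometric to $\R$, and the set $A := \ell \setminus \{f^{-k}(\infty)\}_{k \in \N_0}$ is the complement of a countable set and so is dense in $\ell$. Proposition~\ref{prop-density of elliptic Mobius acc points 2} then says that for every non-periodic $z \in A$ the orbit $\{f^k(z)\}$ is dense in $\ell$; in the generic irrational-rotation regime the only fixed points $\alpha,\beta$ of $f$ lie off $\ell$, so $f|_{\hat\ell}$ has no periodic points at all. This verifies the hypotheses of Theorem~\ref{thm-no hyers ulam stability} in its metric-space version (remark following its proof), which for each $\varepsilon>0$ produces an approximate sequence $\{a_n\}\subset\ell$, eventually periodic with finitely many values and satisfying $|a_{n+1}-f(a_n)|\leq\varepsilon$, such that no exact $f$-orbit contained in $\ell$ stays close to it.

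Next I would rule out exact orbits in $\C\setminus\ell$. By Remark~\ref{rk-concentric circles} any such orbit is either one of the fixed points $\alpha,\beta$ or is dense on a bounded Euclidean invariant circle $h^{-1}(C_r)$ with $r\neq1$. Fixed-point orbits fail because $d(\alpha,\ell)$ and $d(\beta,\ell)$ are positive constants independent of $\varepsilon$, so $\sup_n|a_n-\alpha|\geq d(\alpha,\ell)$ cannot be dominated by any function $G(\varepsilon)\to0$. Circle orbits are bounded with diameter at most that of $h^{-1}(C_r)$, so they can be excluded provided one arranges $\mathrm{diam}\{a_n\}\to\infty$ as $\varepsilon\to0$; that is the one nontrivial extra step. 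To achieve it I would exploit that the first-return time $n_1(\varepsilon)$ diverges, so the orbit segment of $h(a_1)$ on $\mathbb{S}^1$ eventually enters every neighborhood of $1 = h(\infty)$, and $h^{-1}$ sends such neighborhoods to neighborhoods of $\infty\in\hat\ell$. The elementary bound $\sup_n|a_n-b_n'|\geq\tfrac12(\mathrm{diam}\{a_n\}-\mathrm{diam}\{b_n'\})$ then forces $\sup_n|a_n-b_n'|\to\infty$ for any bounded exact orbit, precluding $G(\varepsilon)\to0$.

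The main obstacle is this diameter-divergence step, which is the one genuinely new ingredient beyond combining Proposition~\ref{prop-density of elliptic Mobius acc points 2} with Theorem~\ref{thm-no hyers ulam stability}. Everything else is a catalogue of invariant sets via Remark~\ref{rk-concentric circles} and the bounded-versus-unbounded comparison already present in the proof of Theorem~\ref{thm-no hyers ulam stability}. The exceptional rational-rotation values of $\mathrm{tr}(f)$, where every orbit is periodic, would need a brief separate argument showing that approximately periodic sequences drift away from the finite-order dynamics.
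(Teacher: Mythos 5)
Your reduction to the invariant line $\hat{\ell}$ and your treatment of exact orbits inside $\ell$ and at the fixed points match the paper's proof. The gap is in how you exclude exact orbits lying on the invariant circles $h^{-1}(C_r)$. The inequality $\sup_n|a_n-b_n|\geq\tfrac12\bigl(\mathrm{diam}\{a_n\}-\mathrm{diam}\{b_n\}\bigr)$ is only informative when $\mathrm{diam}\{b_n\}$ is small compared with $\mathrm{diam}\{a_n\}$, but the family of invariant circles has unbounded Euclidean diameters: as $r\to 1$ the circles $h^{-1}(C_r)$ converge to $\hat{\ell}$ and $\mathrm{diam}\,h^{-1}(C_r)\to\infty$. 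Hyers--Ulam stability quantifies over all exact solutions for each fixed $\varepsilon$ and each fixed approximate sequence, so once you have fixed $\varepsilon$ (hence $\mathrm{diam}\{a_n\}<\infty$) there remain invariant circles whose diameter exceeds $\mathrm{diam}\{a_n\}$, and for orbits on those your bound is negative, hence vacuous. The sentence ``forces $\sup_n|a_n-b_n'|\to\infty$ for any bounded exact orbit'' fixes the orbit first and then lets $\varepsilon\to 0$, which is the wrong order of quantifiers. (The diameter-divergence step itself also needs care because $a_1$, and hence the first return time, depends on $\varepsilon$; that part is repairable using the uniformity of return times and of $\delta$-density of orbit segments for a rotation.)

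The paper sidesteps all of this with one uniform estimate: every invariant circle $h^{-1}(C_r)$, $r\neq 1$, is disjoint from $\ell$ and encloses one of the fixed points, hence contains an arc at distance at least $L:=\mathrm{dist}(\alpha,\ell)>0$ from $\ell$; since the exact orbit is dense in its circle while the approximate sequence lies in $\ell$, one gets $|a_n-b_n|\geq L$ for infinitely many $n$, uniformly in $r$, which already contradicts $G(\varepsilon)\to 0$ as soon as $\varepsilon<L$. You could patch your argument by adding the symmetric inequality $\sup_n|a_n-b_n|\geq\tfrac12\bigl(\mathrm{diam}\{b_n\}-\mathrm{diam}\{a_n\}\bigr)$ for the large circles and a distance-to-$\ell$ argument for circles of diameter comparable to $\mathrm{diam}\{a_n\}$, but at that point the paper's single estimate does the whole job and the diameter blow-up of $\{a_n\}$ becomes unnecessary.
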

\begin{proof}
The sequence $ \{ a_n \}_{n \in \N_0} $ is in the invariant extended line $ \hat{\ell} $ under $ f $ satisfying 
\begin{align*}
| a_{n+1} - f(a_n) | \leq \varepsilon
\end{align*}
for $ n \in \N_0 $. Choose $ \{ a_n \}_{n \in \N_0} $ is the finite sequence which satisfies the properties in the proof of Theorem \ref{thm-no hyers ulam stability} in $ \ell $. 
Then the similar proof in Section \ref{sec-real elliptic map} implies that any sequence $ \{ b_n \}_{n \in \N_0} $ satisfying $ b_{n+1} = f(b_n) $ for every $ n \in \N_0 $ in $ \ell $ has no Hyers-Ulam stability. 
\medskip\\
Let $ h $ be the map $ h(z) = \frac{z - \alpha}{z - \beta} $. Observe that $ h \circ f \circ h^{-1} $ is an irrational rotation on $ \C $. If $ \{ b_n \}_{n \in \N_0} $ is contained in $ \C \setminus \hat{\ell} $, then $ \{ b_n \}_{n \in \N_0} $ is contained in the circle disjoint from $ \ell $ and this circle is $ h^{-1}(C_r) $ for some $ r>0 $. Observe that if $ r = 0 $, then $ C_r = \{ \alpha \} $ and if $ r =1 $, then $ C_r = \hat{\ell} $. Since $ h \circ f \circ h^{-1} $ is an irrational rotation, $ \{ f^n(h^{-1}(x)) \}_{n \in \N_0} $ is the dense subset of $ h^{-1}(C_r) $. We may assume that $ 0 < r < 1 $ and the fixed point $ \alpha $ is contained in the interior of $ C_r $ by Remark \ref{rk-concentric circles}. Denote the (minimal) distance between the point $ \alpha $ and the line $ {\ell} $ by $ L $. Then the density of $ \{ b_n \}_{n \in \N_0} $ on the circle $ C_r $ and the finiteness of $ \{ a_n \}_{n \in \N_0} $ imply that $ | a_n - b_n | \geq L $ for infinitely many $ n \in \N $ for all $ \varepsilon < L $. Hence, $ \{ b_n \}_{n \in \N_0} $ has no Hyers-Ulam stability. 
\end{proof}

\section{Non stability of periodic sequence}

Let the sequence $ \{ b_n \}_{n \in \N_0} $ satisfying $ b_{n+p} = b_n $ for every $ n \in \N_0 $ for some $ p \in \N $ be {\em periodic sequence}. The least positive number $ p $ satisfying the above equation is called the {\em peroid} of sequence. If $ p =1 $, then it is called {\em constant sequence}. 

\begin{lem} \label{lem-periodic sequence no stability}
Let the sequence $ \{ b_n \}_{n \in \N_0} $ in $ \R $ be a periodic sequence with period $ p $. Then $ \{ b_n \}_{n \in \N_0} $ has no Hyers-Ulam stability. 
\end{lem}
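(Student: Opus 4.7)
The plan is to exhibit, for every $\varepsilon > 0$, an approximate sequence $\{a_n\}_{n \in \N_0}$ with $|a_{n+1} - F(a_n)| \leq \varepsilon$ that drifts unboundedly, and then to compare it with every exact solution of $b_{n+1} = F(b_n)$, all of which turn out to be bounded.

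First I would record the structural consequences of periodicity. Since $\{b_n\}$ is $p$-periodic in $\R$, it takes values only in the finite set $\{b_0, b_1, \ldots, b_{p-1}\}$, so it is bounded by $M := \max_{0 \leq i < p} |b_i|$. In the setting of this section the map $F$ is an elliptic M\"obius transformation, and the existence of a periodic orbit forces its rotation number $\theta$ to be a rational multiple of $2\pi$, say $\theta = 2\pi q/p$. The conjugacy $h \circ F \circ h^{-1}(x) = e^{i\theta} x$ from Lemma~\ref{lem-conjugation and rotation} then gives $F^p = \mathrm{id}$ on $\hat{\R}$, so \emph{every} exact solution is $p$-periodic and bounded.

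Next I would build the drifting approximate sequence. Set $a_0 = b_0$, and define $a_{n+1} = F(a_n)$ whenever $n \not\equiv p-1 \pmod p$, while setting $a_{n+1} = F(a_n) + \varepsilon$ whenever $n \equiv p-1 \pmod p$. By construction $|a_{n+1} - F(a_n)| \in \{0, \varepsilon\}$, so the approximate inequality is satisfied for every $n$; moreover, using $F^p = \mathrm{id}$ and a short induction on $k$, one verifies $a_{kp} = b_0 + k\varepsilon$ for every $k \in \N_0$, so the subsequence $\{a_{kp}\}$ escapes to infinity. For any exact solution $\{b'_n\}$, bounded by some $M'$, this forces $|a_{kp} - b'_{kp}| \geq k\varepsilon - |b_0| - M' \to \infty$, so no fixed $G(\varepsilon)$ can bound $|a_n - b'_n|$ uniformly in $n$. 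Hence the difference equation admits no Hyers-Ulam stability.

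The main obstacle is guaranteeing that the identity $F^p = \mathrm{id}$ holds on \emph{all} of $\hat{\R}$ rather than merely along the orbit of $\{b_n\}$; this globality is exactly what allows the $\varepsilon$-jumps to accumulate linearly instead of being dissipated by the nonlinear structure of $F$. It is delivered by the conjugacy of $F$ with a rotation of finite order $p$ established in Lemma~\ref{lem-conjugation and rotation}, after which what remains is the clean comparison of unbounded $\{a_n\}$ with uniformly bounded exact solutions sketched above.
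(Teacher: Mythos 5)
Your proof is correct in its essential mechanism --- a linear accumulation of $\varepsilon$-drift set against the boundedness of every exact (periodic) solution --- but it is organized quite differently from the paper's. The paper never touches the map $F$: it passes to the constant subsequence $c_n = b_{pn}$, notes that a constant sequence solves the identity recurrence, and defeats Hyers--Ulam stability for that recurrence with the comparison sequence $d_n = d_0 + n\varepsilon$. You instead build the drift directly into an approximate orbit of $F$ itself, inserting one $\varepsilon$-jump per period and using $F^p = \mathrm{id}$ to conclude $a_{kp} = b_0 + k\varepsilon$. Your route is in one respect more faithful to the definition of Hyers--Ulam stability for the equation $b_{n+1} = F(b_n)$, since the paper does not explain how an approximate solution of the subsequence recurrence lifts back to an approximate solution of the original one; the price is generality, because the lemma is stated for an arbitrary periodic sequence in $\R$, whereas your argument needs the specific fact that $F$ is an elliptic M\"obius map conjugate to a finite-order rotation so that $F^p = \mathrm{id}$ holds globally (which is, however, exactly the situation in which the lemma is applied). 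Two small points to tidy: since $F$ has a pole at $-\frac{d}{c}$, you should note that only finitely many of the points $b_0 + k\varepsilon$ can lie in the finite set $\{ F^{-j}(\infty) : 0 \leq j \leq p-1 \}$, so the construction can be started beyond them (or $\varepsilon$ perturbed slightly) to keep every $a_n$ in $\R$; and the rationality of the rotation number, which you correctly invoke, is what upgrades ``the orbit of $b_0$ is periodic'' to ``$F^p = \mathrm{id}$ on all of $\hat{\R}$,'' the globality your induction genuinely requires.
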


\begin{proof}
Any periodic sequence has constant subsequence. For example, let $ \{ c_n \}_{n \in \N_0} $ be the sequence satisfying $ c_n = b_{pn} $ for every $ n \in \N_0 $. Thus $ \{ c_n \}_{n \in \N_0} $ is the constant sequence $ \{ c_0 \}_{n \in \N_0} $. It suffice to show that the constant sequence has no Hyers-Ulam stability. For any small enough $ \varepsilon > 0 $, define the sequence $ \{ d_n \}_{n \in \N_0} $ as follows
\begin{enumerate}
\item $ d_0 $ is arbitrary and  \vspace{-5pt}
\item $ d_n = d_0 + n\varepsilon $ for $ n \in \N $
\end{enumerate}
Then $ | d_{n+1} - d_n | \leq \varepsilon $ for all $ n \in \N_0 $. 
However, for any constant sequence $ \{ c_0 \}_{n \in \N_0} $ satisfying $ | c_0 -d_0 | \leq \varepsilon $ such that 
\begin{align*}
| c_k - d_k | = | c_0 - d_k | = | c_0 - d_0 - n\varepsilon | \geq -| c_0 - d_0 | + n\varepsilon \geq (n-1)\varepsilon
\end{align*}
for all $ n \geq 2 $. Since $ | c_k - d_k | $ is unbounded for $ n \in \N $, the constant sequence $ \{ c_n \}_{n \in \N_0} $ has no Hyers-Ulam stability. Hence, the periodic sequence $ \{ b_n \}_{n \in \N_0} $  has no Hyers-Ulam stability either. 
\end{proof}

\smallskip
\begin{exa}
There are non-linear M\"obius transformations, of which  finitely many composition is the identity map. For example, see the following maps 
\begin{align*}
p(z) = \frac{\sqrt{3}z - 2}{2z - \sqrt{3}}, \quad q(z) = \frac{-1}{z - \sqrt{3}}, \quad r(z) = \frac{-z-1}{z} .
\end{align*}
Thus $ p \circ p(z) = p^2(z) = z $ for all $ z \in \C $ and the trace of $ p $ is that $ \mathrm{tr}(p) = \sqrt{3} + (-\sqrt{3}) = 0 $. The map $ q $ satisfies that $ q^3 = p $ by the direct calculation and then $ q^6(z) = z $ for all $ z \in \C $. The trace of $ q $ is that $ \mathrm{tr}(q) = 0 + (-\sqrt{3}) = -\sqrt{3} $. Finally, $ r^3(z) = z $  for all $ z \in \C $ and $ \mathrm{tr}(r) = -1 + 0 = -1 $. All of the traces of $ p $, $ q $ and $ r $ are between $ -2 $ and $ 2 $. 
\end{exa}

\medskip
Remark \ref{rk-concentric circles} implies that $ h \circ g \circ h^{-1}(x) = e^{i\theta}x $ for every elliptic M\"obius map $ g $. If $ \theta = \frac{q}{p} $, then $ g^p(x) =x $ for all $ x \in \C $. Thus the sequence $ \{ g^n(x) \}_{n \in \N_0} $ is periodic. Then Corollary \ref{cor-no stability of irrational rot1} and Lemma \ref{lem-periodic sequence no stability} implies the following theorem.

\medskip
\begin{thm}
Let $ g(x) = \frac{ax + b}{cx + d} $ be the linear fractional map on $ \hat{\R} $ for $ c \neq 0 $, $ ad-bc = 1 $. Supposet that $ g $ is the elliptic linear fractional map, that is, $ -2 <a +d < 2 $. Then the sequence $ \{ b_n \}_{n \in \N_0} $ in $ \R $ satisfying $ b_{n+1} = g(b_n) $ for $ n \in \N_0 $ either satisfies that $ g^k(b_0) = \infty $ for some $ k \in \N $ or it has no Hyers-Ulam stabiliy. 
\end{thm}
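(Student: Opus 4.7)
The plan is to recognize this theorem as the combination of the two cases already handled separately in the paper, organized by whether the conjugated rotation is rational or irrational. By Lemma \ref{lem-conjugation and rotation}, conjugating $g$ by $h(x) = \frac{x-\alpha}{x-\bar\alpha}$ yields $h \circ g \circ h^{-1}(z) = e^{i\theta} z$ for some $\theta \in \R$, where $\alpha, \bar\alpha$ are the complex conjugate fixed points. I would assume throughout the argument that we are in the nontrivial alternative, namely $g^k(b_0) \neq \infty$ for every $k \in \N$, and then aim to conclude that $\{b_n\}_{n \in \N_0}$ has no Hyers-Ulam stability.

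The first step is to split into the irrational and rational rotation cases by examining $\theta/(2\pi)$. If $\theta/(2\pi)$ is irrational, then $h \circ g \circ h^{-1}$ is an irrational rotation on $\mathbb{S}^1$, and Corollary \ref{cor-no stability of irrational rot1} applies directly: under the standing assumption $g^k(b_0) \neq \infty$ for all $k$, that corollary already gives no Hyers-Ulam stability.

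The second step handles the rational case. If $\theta = 2\pi q/p$ with $p,q$ integers and $p \geq 1$, then $(h \circ g \circ h^{-1})^p$ is the identity on $\hat{\C}$, so $g^p = \mathrm{id}$ on $\hat{\C}$. Consequently $b_{n+p} = g^p(b_n) = b_n$ for every $n \in \N_0$, and since by assumption the orbit never reaches $\infty$, the entire sequence $\{b_n\}_{n \in \N_0}$ is a periodic sequence in $\R$. Lemma \ref{lem-periodic sequence no stability} then immediately yields that $\{b_n\}_{n \in \N_0}$ has no Hyers-Ulam stability.

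The main obstacle, modest as it is, lies in the rational case: one must justify that the identification $g^p = \mathrm{id}$ on $\hat{\C}$ really follows from $(h \circ g \circ h^{-1})^p = \mathrm{id}$ (which is immediate since $h$ is a bijection of $\hat{\C}$), and more importantly that Lemma \ref{lem-periodic sequence no stability} is applicable verbatim. The lemma's hypothesis is that the sequence takes values in $\R$, so it is essential that we have ruled out the excluded alternative $g^k(b_0) = \infty$ beforehand; otherwise the orbit would escape $\R$ and the periodic-sequence lemma would require a separate argument. Combining the two cases under the single dichotomy on $\theta/(2\pi)$ then completes the proof.
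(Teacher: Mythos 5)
Your proposal is correct and follows essentially the same route as the paper: the paper likewise splits on whether the rotation angle of $h\circ g\circ h^{-1}$ is rational or irrational, invoking Lemma \ref{lem-periodic sequence no stability} in the periodic (rational) case and Corollary \ref{cor-no stability of irrational rot1} in the irrational case. Your additional care in writing the rationality condition as $\theta/(2\pi)\in\Q$ and in noting that the excluded alternative $g^k(b_0)=\infty$ must be ruled out before applying the periodic-sequence lemma is a slight improvement in precision over the paper's one-paragraph argument, but not a different proof.
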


If the sequence $ \{ f^n(z) \}_{n \in \N_0} $ is periodic, then Corollary \ref{cor-no stability of irrational rot2} and Lemma \ref{lem-periodic sequence no stability} implies the following theorem.

\begin{thm}
Let $ f(z) = \frac{az + b}{cz + d} $ be the M\"obius map on $ \hat{\C} $ for $ ad-bc = 1 $, $ c \neq 0 $. Suppose that $ f $ is the elliptic M\"obius map, that is, $ -2 <a +d < 2 $. Then the sequence $ \{ b_n \}_{n \in \N_0} $ in $ \C $ satisfying $ b_{n+1} = f(b_n) $ for $ n \in \N_0 $ either satisfies that $ g^k(b_0) = \infty $ for some $ k \in \N $ or it has no Hyers-Ulam stabiliy. 
\end{thm}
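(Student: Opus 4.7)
The plan is to reduce the theorem to a dichotomy on the rotation angle produced by the standard normal form of an elliptic M\"obius map, and then to invoke the two results already established. First I would set $h(z) = \frac{z-\beta}{z-\alpha}$ as in Remark \ref{rk-concentric circles}, so that $h \circ f \circ h^{-1}$ is a rotation $z \mapsto e^{i\theta} z$ on $\hat{\C}$ for some $\theta \in \R$, with $|e^{i\theta}| = 1$ by Lemma \ref{lem-conjugation and rotation}. I would also dispose of the stated exceptional alternative by assuming from now on that $f^k(b_0) \neq \infty$ for every $k \in \N$.

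The proof then splits on whether $\theta/(2\pi)$ is rational. In the rational case, some iterate of the rotation equals the identity on $\hat{\C}$, hence some iterate $f^p$ is the identity, and therefore $b_{n+p} = b_n$ for every $n \in \N_0$. Lemma \ref{lem-periodic sequence no stability} then rules out Hyers-Ulam stability. In the irrational case, no iterate of $f$ equals the identity, so in particular $f^k(b_0) \neq b_0$ for every $k \in \N$; the hypothesis of Proposition \ref{prop-density of elliptic Mobius acc points 2} is satisfied, and Corollary \ref{cor-no stability of irrational rot2} applies directly.

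The one substantive point I would need to verify is that Lemma \ref{lem-periodic sequence no stability}, stated and proved for sequences in $\R$, really does carry over to sequences in $\C$. This is the main (and essentially only) obstacle, and it is a translation rather than a new argument: the perturbation $d_n = d_0 + n\varepsilon$ and the lower bound $|c_0 - d_n| \geq (n-1)\varepsilon$ depend only on the metric space structure, so the proof runs verbatim once one observes that an arithmetic progression in $\C$ also drifts to infinity at linear rate $\varepsilon$ while producing an $\varepsilon$-pseudo-orbit of the constant subsequence $c_n = b_{pn}$. Combining the two cases then yields the stated alternative.
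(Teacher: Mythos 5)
Your proposal is correct and follows essentially the same route as the paper, which likewise disposes of the theorem by splitting on whether the conjugated rotation is rational (periodic orbits, handled by Lemma \ref{lem-periodic sequence no stability}) or irrational (handled by Corollary \ref{cor-no stability of irrational rot2}). If anything you are slightly more careful than the paper, which applies the periodic-sequence lemma to a sequence in $\C$ without remarking that the lemma was stated for $\R$; your observation that its proof is purely metric and transfers verbatim is a worthwhile addition.
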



\end{document}